\newtheorem{thm}{Theorem}[section]
\newtheorem{prop}[thm]{Proposition}
\theoremstyle{definition}
\theoremstyle{remark}
\newtheorem{rem}[thm]{Remark}
\numberwithin{equation}{section}
\theoremstyle{definition}
\newcommand{\inte}{\textrm{int}}
\begin{document}
	\title[Simple proof of anassov's conjecture]{A proof of Atanassov's Conjecture and other generalizations of Sperner's Lemma.}

	\author{Yitzchak Shmalo}
	\address{}
	\email{ }

	\begin{abstract}
		A simple proof of Atanassov's Conjecture is presented. Atanassov's Conjecture is a generalization of Sperner's Lemma, a lemma which has been used to prove Brouwer's Fixed Point Theorem, among other fixed point theorems. The proof of Atanassov's Conjecture is based on the Brouwer Degree of maps and is extremely elementary. It is much simpler than the original proofs given for the conjecture and provides some insight into the nature of the conjecture. Furthermore, a generalization of the conjecture is presented and finally a new theorem, similar to the original Sperner Lemma, is proved.             
	\end{abstract}

	\maketitle

	\section{Introduction}
	
	In the beginning of the twentieth century, Luitzen Brouwer proved the Brouwer Fixed Point Theorem, which states that every continuous function $f$ from an $n$-dimensional ball to itself has a fixed point, meaning that for some $x$ we have $f(x)=x$. Historically, Sperner's Lemma has been used to provide a straightforward proof for Brouwer's Fixed Point Theorem. Some have also used Sperner's Lemma to prove Kakutani Fixed Point Theorem, a generalization of Brouwer's Fixed Point Theorem. Such proofs are generally simpler than those previously offered for the existence of fixed points and are also somewhat constructive, meaning that they can be used to approximate the position of fixed points. Sperner's Lemma itself is also simple to prove and much is known about generalizations of the lemma, for example Atanassov's Conjecture. In this paper, proofs for Atanassov's Conjecture and other propositions similar to Sperner's Lemma are given.

	\subsection{Sperner's Lemma for simplices}\label{sec:sperner}
	Consider an $n$-simplex $T$, and $T=\bigcup_{i\in I} T_i$, with $I$ finite, a simplicial decomposition of $T$.
	A labeling of $T$ is a map $\phi:T\to\{1,2,\ldots,n+1\}$. In particular, each vertex $v$ of $T$ and of any of the $T_i$'s is assigned a unique label $\phi(v)\in \{1,2,\ldots,n+1\}$.
	
	A simplex $T$ is said to be \emph{completely labeled} if its vertices are assigned all labels from $\{1,2,\ldots,n+1\}$.
	
	The labeling $\phi$ is called a \emph{Sperner labeling} if every  point $p$ that lies on some face $S$ of $T$  is assigned one of the labels of the vertices of $S$.
	There are no restrictions on the labeling of the interior points of $T$.
	
	In the most basic form the Sperner Lemma states the following:
	\begin{thm}[Sperner Lemma \cite{Sperner1928}]
		Given an $n$-simplex $T$ with a complete Sperner labeling, a simplicial decomposition $T=\bigcup_{i} T_i$.
		Then the number of completely labeled simplices $T_i$ is odd.
		In particular,  there exists at least one simplex $T_i$ that is completely labeled.
	\end{thm}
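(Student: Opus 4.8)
The plan is to prove Sperner's Lemma by induction on the dimension $n$, using a counting argument based on "doors" between adjacent simplices of the decomposition.

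The plan is to prove the lemma by induction on the dimension $n$ via a parity (double-counting) argument on certain distinguished faces of the sub-simplices, which I will call \emph{doors}.

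Say that an $(n-1)$-dimensional face of some $T_i$ is a \emph{door} if the labels of its $n$ vertices are exactly $\{1, 2, \ldots, n\}$. The first step is to count doors from the point of view of the sub-simplices. A short case analysis shows that each $T_i$ contains: exactly one door if $T_i$ is completely labeled (the door being the face opposite the vertex labeled $n+1$); exactly two doors if the label set of $T_i$ is precisely $\{1, \ldots, n\}$ (so one label is repeated); and no door otherwise. Hence, writing $C$ for the number of completely labeled $T_i$, the total door count $D$ (counted with multiplicity, one for each incident pair of a sub-simplex and one of its doors) satisfies $D \equiv C \pmod 2$.

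Next I count the same doors from the point of view of the faces themselves. An interior door, being shared by exactly two sub-simplices, is counted twice; a door lying on the boundary $\partial T$ is counted once. So if $B$ denotes the number of doors on $\partial T$, then $D \equiv B \pmod 2$, and combining with the previous step gives $C \equiv B \pmod 2$. The crucial observation is that by the Sperner condition a boundary door must lie on the specific face $S$ of $T$ whose vertices carry the labels $1, \ldots, n$: any other face of $T$ omits some label in $\{1, \ldots, n\}$, so points on it cannot realize that label, and a door cannot fit there. Restricting $\phi$ to $S$ gives a complete Sperner labeling of the $(n-1)$-simplex $S$, and the doors lying on $S$ are precisely its completely labeled $(n-1)$-dimensional sub-simplices.

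The induction now closes: the base case $n=0$ (a single vertex labeled $1$) is immediate, and in the inductive step the number of completely labeled sub-simplices of $S$ is odd by the inductive hypothesis, so $B$ is odd, whence $C \equiv B \equiv 1 \pmod 2$. I expect the main obstacle to be the case analysis establishing the per-simplex door counts together with the verification that every boundary door is confined to the face $S$; both are elementary but must be stated carefully so that the parity bookkeeping is airtight.
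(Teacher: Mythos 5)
Your proof is correct: it is the classical combinatorial proof of Sperner's Lemma by induction on dimension via double-counting of ``doors'' (faces labeled $\{1,\ldots,n\}$), and the case analysis you sketch (one door per completely labeled simplex, two doors when the label set is exactly $\{1,\ldots,n\}$, none otherwise; interior doors counted twice, boundary doors confined to the single face of $T$ carrying labels $1,\ldots,n$) is exactly what is needed to make the parity bookkeeping work. Note, however, that the paper does not actually prove this statement --- it cites it to Sperner and uses it only as motivation --- and the methods the paper does develop for its own results (Atanassov's Conjecture and its generalizations) are of a different character: there the key tool is the Brouwer degree of a realization $f:P\to T$, together with the identity $\deg(\partial f)=\deg(f)$ and additivity of the degree over the pieces of the decomposition. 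Your inductive argument is more elementary (purely combinatorial, no topology) and gives the sharper parity conclusion ``odd'' directly, which the degree argument of Theorem \ref{thm:bekker2} does not by itself yield (a nonzero degree only guarantees existence, and extracting the mod-$2$ statement from degree theory requires an extra step); on the other hand, the degree-theoretic framework is what generalizes to arbitrary polytopes and to the covering-number bound $n-d$, where the door-counting argument does not directly apply. So your proof is a valid and in some respects stronger proof of this particular statement, but by a genuinely different route from the machinery the paper is built on.
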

	
	Atanassov's Conjecture is a generalization of the lemma which states the following.
	
	A simplex $P_i$ is completely labeled if it has vertices carrying $d+1$ different labels. Let $P$ be a convex, $d$-dimensional polytope, with $n$ vertices, which is divided into convex, $d$-dimensional simplices $\{P_i\}_{i\in I}$, with $I$ finite, such that $P=\bigcup_{i\in I}P_i$, and  for $i\neq j$, $\inte (P_i)\cap \inte (P_j)=\emptyset$ and $P_i\cap P_j$ is either empty or a face of both $P_i$ and $P_j$. Assume that $\phi:V(P)\cup\bigcup_{i\in I}V(P_i)\to\{1,2,\ldots,n\}$ is a labeling of the vertices of $P$ and of $P_i$ such that:
	\begin{itemize}
		\item  Each label from $\{1,2,\ldots,n\}$ is assigned to some  vertex of $P$, so  no two vertices are assigned the same label;
		\item If a vertex   of $P_i$ lies on some face $W$ of $P$, then that vertex is assigned one of the labels of  the vertices of $W$.
	\end{itemize}
	
	Then there are at least $n-d$ simplices of $\{P_i\}_{i\in I}$; each completely labeled. This theorem was first proved in \cite{Su2002}.

	\section{Preliminaries}

	\subsection{Sperner's Lemma for polytopes}\label{sec:bekker}
	First we describe a more general Sperner Lemma-type of result for polytopes, following \cite{Bekker1995}. Parts of this section were taken almost directly from \cite{GS}. Let $P$ be a convex $n$-dimensional  polytope. We consider a labeling $\phi:P\to \{1,2,\ldots, n+1\}$ of $P$.
	
	A polytope is said to be \emph{completely labeled} if  its vertices are assigned all labels from $\{1, 2, \ldots, n+1\}$.
	
	A labeling $\phi:P\to \{1,\ldots,n+1\}$ is said to be a \emph{non-degenerate labeling} if no $(n-1)$-dimensional face of $P$ contains points which take $(n+1)$ or more different values. Thus, every $n$-dimensional face of $P$ can only contains points which take $(n+1)$ or less different labels.
	
	We  introduce some tools.
	
	Consider the standard $n$-dimensional simplex $T=\textrm{conv}(0, e_1,e_2,\ldots ,e_n)\subset \mathbb{R}^n$, where we denote by $(e_1,e_2,\ldots ,e_n)$ the standard basis of $\mathbb{R}^n$, and by $\textrm{conv}(\cdot)$ the convex hull of a set. We denote the corresponding vertices of $T$ as $\{a_1,\ldots, a_{n+1}\}$.   
	
	Given an $n$-dimensional, oriented, convex polytope  $P$, a  labeling  $\phi:P\to\{1,\ldots,n+1\}$, and the standard $n$-simplex $T$ of vertices $a_1,\ldots,a_{n+1}$, a \emph{realization} of $\phi$ is a continuous map $f: P \to T$, satisfying the following conditions:
	\begin{itemize}
		\item[(i)] If $v$ is a vertex of $P$ then $f(v)=a_{\phi(v)}$, i.e., $f(v)$ is the vertex $a_i$ of  $T$ with the index $i$ equal to the label of $v$;
		\item[(ii)] If $S$ is face of $P$  with vertices $v_1,\ldots,v_k$,  then $f(S)\subset \textrm{conv}(a_{\phi(v_1)},\ldots, a_{\phi(v_k)})$.
	\end{itemize}
	Informally, a realization of $P$ is a continuous mapping of $P$ onto $T$ that `wraps' $\partial P$ around $\partial T$, such that the labels of the vertices of $P$ match with the indices $i$ of the vertices of $T$. Such $f$ is in general non-injective. For a smooth, boundary preserving map $f:(M,\partial M)\to (N,\partial N)$ between two oriented $n$-dimensional manifolds with boundary, it is known that $\deg(f)=\deg(\partial f)$, where  $\partial f:\partial  M\to\partial N$ is the map induced by $f$ on the boundaries. The degree of the map $f$ can be defined as the signed number of preimages $f^{-1}(p)=\{q_1,\ldots,q_k\}$ of a regular value  $p$ of the map $f$, where each point $q_i$ is counted with a sign $\pm 1$ depending on whether  $df_{q_i}:T_{q_i}M\to T_{p}N$ is orientation preserving or orientation reversing. That is, $\deg(f)=\sum_{q\in f^{-1}(p)}\textrm{sign} (\det (df_{q}))$, where $p\in  N\setminus \partial N$ is a regular value of $f$. The definition of the Brouwer degree extends via homotopy to continuous maps.

	\begin{prop}[\cite{Bekker1995}]\label{prop:Bekker}  Let $P$  be a convex  $n$-dimensional polytope.
		\begin{itemize}
			\item[(i)] Any labeling $\phi$ of some $P$ admits a realization $f$;
			\item[(ii)] Any two realizations of the same labeling are homotopic as maps of pairs $(P,\partial P)\mapsto (T,\partial T)$;
			\item[(iv)] If $\deg(f)\neq 0$ then $P$ is completely labeled, where $\deg(f)$ is the Brouwer Degree of the realization of $\phi$, a labeling of $P$.
		\end{itemize}
	\end{prop}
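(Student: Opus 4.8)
The plan is to treat the three assertions separately, each resting on the convexity of the target simplex $T$ and of the sub-convex-hulls $C_S := \mathrm{conv}(a_{\phi(v)} : v \in V(S))$ attached to a face $S$ of $P$, where $V(S)$ denotes the vertex set of $S$.

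For (i), I would construct the realization by induction on the skeleta of $P$. On the $0$-skeleton I set $f(v) = a_{\phi(v)}$, which is forced by the vertex condition and trivially respects the face condition. Assuming $f$ has been defined on the $(k-1)$-skeleton so that every face $S$ of dimension $< k$ satisfies $f(S) \subset C_S$, I extend over each $k$-face $S$: its boundary $\partial S$ already lands in the convex set $C_S$, since each proper face of $S$ has its vertices among those of $S$. I then cone off: writing $x \in S$ as $x = (1-t) b_S + t\, y$ with $b_S$ the barycenter, $y \in \partial S$, and $t \in [0,1]$, I set $f(x) = (1-t) c + t\, f(y)$ for a fixed $c \in C_S$. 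Convexity of $C_S$ keeps the image inside $C_S$ and continuity is immediate, so the induction closes at the top face $S = P$.

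For (ii), given two realizations $f_0, f_1$ of the same $\phi$, I would use the straight-line homotopy $H(x,t) = (1-t) f_0(x) + t\, f_1(x)$, which lands in $T$ because $T$ is convex. The only point requiring care is that this be a homotopy of pairs, i.e. that $H(\partial P \times [0,1]) \subset \partial T$. Here the non-degeneracy hypothesis enters: any $x \in \partial P$ lies on some facet $F$, whose vertex labels omit at least one element of $\{1,\ldots,n+1\}$, so that $C_F$ is a proper face of $T$ and hence $C_F \subset \partial T$. Since $f_0(x), f_1(x) \in C_F$ and $C_F$ is convex, the whole segment $H(x,\cdot)$ stays in $C_F \subset \partial T$.

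For (iv), I would argue the contrapositive. If $P$ is not completely labeled, some label $j$ is carried by no vertex of $P$; then $C_P$ avoids $a_j$ and lies in the facet $F_j = \mathrm{conv}(a_i : i \neq j) \subset \partial T$, so that $f(P) \subset F_j$. Since $F_j$ has empty interior in $T$, I may choose a regular value $p \in \mathrm{int}(T) \setminus f(P)$; then $f^{-1}(p) = \emptyset$, whence $\deg(f) = \sum_{q \in f^{-1}(p)} \mathrm{sign}(\det df_q) = 0$, and this vanishing is independent of the chosen realization by part (ii) and homotopy invariance of the degree. The main obstacle is less any single computation than the bookkeeping that pins each structural fact to its role: recognizing that non-degeneracy is precisely what secures the boundary-to-boundary condition in (ii), and that a single missing label collapses the entire image into a facet in (iv).
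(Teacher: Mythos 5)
The paper does not actually prove this proposition --- it is stated as an imported result, cited to the Bekker--Madorskij paper, and used as a black box in the proofs that follow --- so there is no in-paper argument to compare yours against. Judged on its own, your proof is correct and is the standard one. The skeleton-by-skeleton cone construction for (i) works because $C_{S'}\subset C_S$ whenever $S'$ is a face of $S$, so the inductive hypothesis on $\partial S$ feeds the coning step; the straight-line homotopy for (ii) works because each $C_F$ is convex; and the contrapositive for (iv) is exactly the right mechanism: a missing label forces $f(P)$ into the facet of $T$ opposite $a_j$, so $f$ misses every interior point of $T$ and the degree vanishes. Two small remarks. First, you are right that non-degeneracy is what makes a realization a map of pairs $(P,\partial P)\to(T,\partial T)$; the proposition as stated omits this hypothesis, but the surrounding text (and the way the result is used later) makes clear it is intended, so flagging it is a feature of your write-up, not a defect. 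Second, in (iv) phrase the final step as ``a continuous map that misses a point of $\inte (T)$ has degree zero at that point'' rather than picking a regular value of $f$ itself, since $f$ is only continuous; one perturbs to a smooth representative in the same homotopy class, or simply invokes the homotopy invariance you already cite via part (ii). Neither point is a gap.
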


	The following is a generalization of the Sperner Lemma from  \cite{Bekker1995}.

	\begin{thm}[\cite{Bekker1995}]\label{thm:bekker2}
		Assume that $P$ is an $n$-dimensional polytope,  $P=\bigcup_{i\in I}P_i$ is a decomposition of $P$ into polytopes as above, and $\phi:P\to\{1,\ldots,n+1\}$ is a \emph{non-degenerate labeling}.   If $\deg(\partial f)\neq 0$, then there exists a polytope $P_i$ that is completely labeled.
	\end{thm}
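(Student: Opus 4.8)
The plan is to express the nonzero degree of $f$ as a sum of signed local contributions indexed by the sub-polytopes $P_i$, and then to argue that a $P_i$ can contribute only if it is completely labeled. First I would fix a realization $f:P\to T$ of $\phi$ that is \emph{compatible} with the decomposition, meaning that $f$ carries each piece into the convex hull of the images of its own vertices,
\[
f(P_i)\subseteq \textrm{conv}\bigl(\{a_{\phi(v)} : v\in V(P_i)\}\bigr).
\]
Such an $f$ can be produced directly: send every vertex $v$ occurring in the decomposition to $a_{\phi(v)}$ and extend affinely over the simplices of a triangulation refining $\{P_i\}_{i\in I}$. By Proposition \ref{prop:Bekker}(i)--(ii), any realization of $\phi$ is homotopic to this chosen one as a map of pairs $(P,\partial P)\to(T,\partial T)$; since the Brouwer degree is a homotopy invariant and $\deg(f)=\deg(\partial f)$ for boundary-preserving maps, the hypothesis $\deg(\partial f)\neq 0$ forces $\deg(f)\neq 0$ for this compatible realization.

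Next I would compute $\deg(f)$ at a carefully chosen regular value. The images $f(\partial P_i)$ of the at-most-$(n-1)$-dimensional boundaries form a finite union of measure-zero subsets of $T$, so I can select a regular value $p\in\inte(T)$ with $p\notin f(\partial P_i)$ for all $i$. Then every preimage lies in the interior of exactly one $P_i$, and the signed preimage count splits as
\[
\deg(f)=\sum_{q\in f^{-1}(p)}\textrm{sign}(\det(df_q))=\sum_{i\in I} d_i,\qquad d_i:=\sum_{q\in f^{-1}(p)\cap \inte(P_i)}\textrm{sign}(\det(df_q)).
\]
The crux is then the claim that $d_i=0$ whenever $P_i$ is \emph{not} completely labeled. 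Indeed, if the vertices of $P_i$ carry at most $n$ distinct labels, then by compatibility $f(P_i)$ is contained in the convex hull of at most $n$ of the vertices $a_1,\dots,a_{n+1}$, which is a proper face of $T$ of dimension at most $n-1$. Since $p$ lies in the interior of $T$, it cannot belong to this face, so $f^{-1}(p)\cap P_i=\emptyset$ and $d_i=0$. Hence $\deg(f)=\sum_{i:\,P_i\ \textrm{completely labeled}} d_i$, and as the left-hand side is nonzero at least one completely labeled $P_i$ must exist.

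I expect the main obstacle to be the first step rather than the degree bookkeeping: one must exhibit a single map that is simultaneously a genuine realization of the labeling of $P$ (so that Proposition \ref{prop:Bekker} and the identity $\deg(f)=\deg(\partial f)$ apply) and compatible with the finer decomposition into the $P_i$ (so that the face-containment property may be invoked on each piece), and then verify that passing to this special realization leaves the degree unchanged. The non-degeneracy of $\phi$ is what keeps the construction consistent along shared faces, and the affine extension over a refining triangulation is the mechanism I would use to guarantee compatibility while staying within the homotopy class of an arbitrary realization.
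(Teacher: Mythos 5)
Your argument is correct and lands on the same two pillars as the paper's proof --- a realization adapted to the decomposition, plus the identity $\deg(f)=\deg(\partial f)$ --- but the bookkeeping is genuinely different. The paper homotopes $f$ to a map $f^*$ whose restriction to each piece $P^*_i$ is itself a realization sending $\partial P^*_i$ into $\partial T$, writes $\deg(\partial f)=\deg(f^*)=\sum_i\deg(f^*_{\mid P^*_i})=\sum_i\deg(\partial f^*_{\mid \partial P^*_i})$, and then invokes Proposition \ref{prop:Bekker} on the piece with nonzero degree. You instead build the adapted map concretely (affine extension over a refining triangulation), evaluate $\deg(f)$ at a single regular value $p\in\inte(T)$ chosen off the measure-zero set $\bigcup_i f(\partial P_i)$, and observe that an incompletely labeled $P_i$ has image in a proper face of $T$ and so cannot meet $p$. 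This buys you something real: you never need the restriction of $f$ to each $P_i$ to be boundary-preserving or to be a realization in its own right --- only the face-containment $f(P_i)\subseteq\textrm{conv}(\{a_{\phi(v)}:v\in V(P_i)\})$, which the affine extension gives for free --- whereas the paper's piecewise-realization homotopy is the most delicate and least justified step of its proof. The cost is that your argument is tied to a piecewise-affine representative, while the paper's is stated for an arbitrary realization.

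One point you correctly flag but should still nail down: the affinely extended map need not literally satisfy realization condition (ii) for the \emph{original} faces of $P$, since a decomposition vertex $w$ lying on a facet $S$ may carry a label outside $\{\phi(v_1),\ldots,\phi(v_k)\}$; so you cannot simply cite Proposition \ref{prop:Bekker}(ii) to say your map is homotopic to a given realization as a map of pairs. What saves you is exactly non-degeneracy: all labels occurring on $S$ number at most $n$, so both $f(S)$ and $f^*(S)$, and the straight-line homotopy between them, stay inside the proper face $\textrm{conv}(\{a_\ell:\ell\text{ occurs on }S\})\subset\partial T$. Hence $\partial f^*$ and $\partial f$ are homotopic as maps $\partial P\to\partial T$, $\deg(\partial f^*)=\deg(\partial f)$, and $\deg(f^*)=\deg(\partial f^*)\neq 0$, which is all your regular-value count requires. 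With that one sentence added, the proof is complete.
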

	
	\begin{proof}
		Let $f:P\to T$ be a realization of the labeling $\phi$; the existence of $f$ is ensured by Proposition \ref{prop:Bekker}.
		We transform the polytope $P$ into another polytope $P^*$, homotopically transform the  decomposition  $P=\bigcup_{i\in I}P_i$ into another decomposition  $P^*=\bigcup_{i\in I}P^*_i$, and homotopically transform the map
		$f:P\to T$ into a map $f^*:P^*\to T$ as follows:
		\begin{itemize}
			\item We construct a new  polytope $P^*$ by appending  to the  vertices of $P$ all the vertices of the the $P_i$'s lying on the faces of $P$, and appending to the faces of $P$ all the faces of the $P_i$'s lying on the faces of $P$; the resulting polytope $P^*$  still has the decomposition $P^*=\bigcup_{i\in I}P_i$;
			\item We apply a homotopy deformation to the decomposition $\bigcup_{i\in I}P_i$ and to the realization $f$ to obtain a new decomposition  $P^*=\bigcup_{i\in I}P^*_i$, with the labeling of $P^*_i$ inherited from that of $P_i$, and a new realization  $f^*:P^*\to T$ so that, for every $i$, the restriction of $f^*_{\mid {P^*_i}}$ to $P^*_i$ maps $\partial P^*_i$ to $\partial T$ and is also a realization. The later property ensures that $\deg(f^*_{\mid {P^*_i}})=\deg(\partial f^*_{\mid {\partial P^*_i}})$.
		\end{itemize}
		
		From the condition that  if a vertex   of $P^*_i$ lies on some face $S$ of $P^*$, then that vertex is assigned one of the labels of  the vertices of $S$, we have $\deg(f^*) = \deg(f)$.
		Using the addition and homotopy properties of the Brouwer degree, we obtain that
		\[\deg(\partial f) = \deg(f) = \deg(f^*) = \sum_i \deg(f^*_i) =  \sum_i \deg( \partial f^*_i)\]

		If $\deg(f)\neq 0$, then $\sum_i \deg( \partial f^*_i)\neq 0$, which means that  there exists a polytope $P^*_i$ such that $\deg(\partial f^*_{\mid \partial P^*_i})\neq 0$, hence $\deg(f^*_{\mid P^*_i})\neq 0$. By Proposition \ref{prop:Bekker}, and since the labeling of $P_i$ is the same as the labeling of $P^*_i$, we have that $P_i$ is completely labeled.
	\end{proof}

	Now we give the notion of a cover of a polytope, following \cite{Su2002}. A cover $C$ of a convex polytope $P$ is a collection of simplices in $P$ such that $\bigcup_{S \in C} S = P$. We say that a labeled set of simplices cover a polytope  $P$, or is a cover of $P$, under a map $f$, if the following conditions hold: 
	\begin{itemize}
		\item[(i)] If $v$ is a vertex of $S$ in the collection then $f(v)=a_{\phi(v)}$, i.e., $f(v)$ is the vertex $a_i$ of  $P$ with the index $i$ equal to the label of $v$;
		\item[(ii)] If $R$ is face of $S$  with vertices $v_1,\ldots,v_k$,  then $f(R)\subset \textrm{conv}(a_{\phi(v_1)},\ldots, a_{\phi(v_k)})$.
		\item[(ii)] $f$ is surjective.
	\end{itemize}
	
	\begin{thm}[\cite{Su2002} and \cite{Rothschild}]\label{thm:bekker2}
		Let $C(P)$ denote the covering number of an $(n,p)$-polytope $P$;
		which is the size of the smallest cover of $P$. Then, $C(P) \geq (n-d)$. This result is best possible as the equality is attained for stacked polytopes.
	\end{thm}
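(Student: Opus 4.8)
The plan is to convert this purely combinatorial covering bound into a statement about completely labeled simplices, to which the degree machinery of Proposition~\ref{prop:Bekker} and the preceding generalization of Sperner's Lemma applies, and then to sharpen ``at least one'' into ``at least $n-d$'' by an inductive peeling of vertices. First I would exploit that in a cover the target vertices $a_1,\ldots,a_n$ are precisely the vertices of $P$ and that $f$ is surjective onto the $d$-dimensional $P$. For any simplex $S$ of the cover, condition (ii) forces $f(S)\subseteq \textrm{conv}(a_{\phi(v_1)},\ldots,a_{\phi(v_{d+1})})$; if the $d+1$ vertices of $S$ carry at most $d$ distinct labels this hull is at most $(d-1)$-dimensional, so $f(S)$ has $d$-dimensional measure zero. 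Since the cover is finite and $P$ is the closure of its interior, the images of the \emph{completely labeled} simplices (those whose vertices carry $d+1$ distinct labels) must themselves have union equal to $P$. Hence it suffices to bound below the number of full-dimensional vertex simplices $\textrm{conv}(a_{i_0},\ldots,a_{i_d})$ needed to cover $P$.

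The seed of the lower bound is that surjectivity forces every label to be carried. Each $a_i$ is an extreme point of $P$, so $a_i\in f(S)$ only if $a_i$ is an actual vertex of $\textrm{conv}(a_{\phi(v_1)},\ldots)$, i.e.\ only if some vertex of $S$ is labeled $i$. More strongly, any vertex simplex not having $a_i$ among its vertices is a compact subset of $P$ missing the extreme point $a_i$, hence avoids an entire neighborhood of $a_i$; thus every simplex covering a neighborhood of $a_i$ in $P$ has $a_i$ as a vertex, and at least one such simplex exists.

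I would then run an induction on $n$. The base case $n=d+1$ is a simplex, covered by one simplex, so $C(P)=1=n-d$. For $n>d+1$, pick a vertex $w=a_n$ and set $P'=\textrm{conv}(a_1,\ldots,a_{n-1})$, a $d$-dimensional polytope with exactly $n-1$ vertices. The idea is to contract $w$ onto a neighbouring vertex by a retraction $\rho:P\to P'$ that is the identity outside a small cap at $w$: $\rho$ sends each completely labeled simplex incident to $w$ either to a degenerate simplex or to a genuine vertex simplex of $P'$, while fixing the simplices not incident to $w$. One checks that the $\rho$-images still cover $P'$ while at least one simplex incident to $w$ is consumed at the cap and discarded, producing a cover of $P'$ by at most $m-1$ completely labeled simplices, where $m$ is the number for $P$. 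By the inductive hypothesis $m-1\ge (n-1)-d$, so $m\ge n-d$.

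The delicate point, and the step I expect to be the main obstacle, is this contraction: verifying that after pushing $w$ to a neighbour no hole opens in $P'$ along the cut and that the count strictly drops, which requires choosing $w$ and the contraction direction compatibly with the local face structure at $w$ (here the surjectivity/degree statement applied to the cap region is what guarantees that a simplex is actually consumed). Alternatively, the same count can be obtained degree-theoretically by running $n-d$ auxiliary $(d+1)$-label ``forgetful'' labelings $\psi_{d+1},\ldots,\psi_n$, each arranged to be non-degenerate of nonzero degree, so that each forces a completely labeled simplex and distinct indices force distinct simplices; the crux there is precisely arranging both the nonvanishing of each degree and the distinctness, which is the analogue of the no-hole condition above. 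Finally, for the ``best possible'' claim I would note that a stacked $d$-polytope with $n$ vertices admits a triangulation into exactly $n-d$ vertex simplices, giving a cover of size $n-d$ and showing the bound $C(P)=n-d$ is attained.
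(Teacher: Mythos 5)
First, a point of reference: the paper does not prove this theorem at all --- it is quoted from \cite{Su2002} and \cite{Rothschild} and used as a black box in the proof of Atanassov's Conjecture --- so there is no internal argument to compare yours against; your proposal has to stand on its own. The preliminary reductions you make are sound: a simplex of a cover whose vertices carry at most $d$ distinct labels has image of $d$-dimensional measure zero, so only the full-dimensional vertex simplices $\textrm{conv}(a_{i_0},\ldots,a_{i_d})$ can matter; and since each $a_i$ is an extreme point of $P$, a vertex simplex not having $a_i$ among its vertices misses a neighborhood of $a_i$, so every label must occur. The base case $n=d+1$ and the equality for stacked polytopes are also fine.

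The genuine gap is the inductive contraction step, which you correctly identify as the main obstacle but do not close --- and it is exactly the step that carries all of the content of the theorem. Concretely: (a) a retraction $\rho:P\to P'$ that is the identity outside a small cap at $w$ sends a simplex $\textrm{conv}(w,b_1,\ldots,b_d)$ to the set $\bigl(\textrm{conv}(w,b_1,\ldots,b_d)\cap P'\bigr)\cup\rho(\textrm{cap})$, which is \emph{not} a vertex simplex of $P'$; so the image collection is not a cover of the type the induction hypothesis speaks about, and the induction does not close. (b) If instead you contract combinatorially by replacing $w$ with a neighbouring vertex $u$ in every incident simplex, then $\textrm{conv}(w,b_1,\ldots,b_d)$ becomes $\textrm{conv}(u,b_1,\ldots,b_d)$, which is degenerate only when $u$ is affinely dependent on $b_1,\ldots,b_d$; in general nothing is ``consumed,'' the count need not drop, and it is no longer clear that the images cover $P'$ (a cover of a pentagon by the fan from $a_1$ already shows that the simplices avoiding $w$ alone do not cover $P'$, so the modified simplices are genuinely needed and must be controlled). (c) Even the choice of $w$ needs care: if $P$ is a pyramid and $w$ is its apex, $P'=\textrm{conv}(a_1,\ldots,a_{n-1})$ drops to dimension $d-1$; one must argue such a bad vertex can be avoided. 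Your fallback sketch via $n-d$ forgetful $(d+1)$-label colourings has the same status: arranging nonzero degree for each auxiliary labeling and the distinctness of the resulting simplices is precisely the difficulty, not a detail. As written, the argument establishes $C(P)\ge 1$ plus the surjectivity of labels, but not $C(P)\ge n-d$; to complete it you would need either the pulling/projection construction carried out rigorously or the combinatorial argument of \cite{Su2002}.
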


	\section{Proof of Atanassov's Conjecture}
	
	\begin{thm} Let $P$ be a convex, $d$-dimensional polytope, with $n$ vertices, which is divided into convex, $d$-dimensional simplices $\{P_i\}_{i\in I}$, with $I$ finite, such that $P=\bigcup_{i\in I}P_i$, and  for $i\neq j$, $\inte (P_i)\cap \inte (P_j)=\emptyset$ and $P_i\cap P_j$ is either empty or a face of both $P_i$ and $P_j$. Assume that $\phi:V(P)\cup\bigcup_{i\in I}V(P_i)\to\{1,2,\ldots,n\}$ is a labeling of the vertices of $P$ and of $P_i$ such that:
		\begin{itemize}
			\item  Each label from $\{1,2,\ldots,n\}$ is assigned to some  vertex of $P$, so  no two vertices are assigned the same label;
			\item If a vertex   of $P_i$ lies on some face $W$ of $P$, then that vertex is assigned one of the labels of  the vertices of $W$.
		\end{itemize}
		
		A simplex $P_i$ is completely labeled if it has vertices carrying $d+1$ different labels. Atanassov's Conjecture states that there are at least $n-d$ simplices $P_i$; each completely labeled.
		
	\end{thm}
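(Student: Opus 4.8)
The plan is to linearize the labeling into a single simplicial map and then read off the completely labeled simplices as the full-dimensional pieces of its image, reducing the counting problem to the covering-number bound already recorded above.

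First I would fix the standard $(n-1)$-simplex $\Delta=\textrm{conv}(a_1,\dots,a_n)$ and define $f\colon P\to\Delta$ by setting $f(v)=a_{\phi(v)}$ at every vertex $v$ of every $P_i$ and extending $f$ affinely across each simplex $P_i$; this is a realization of $\phi$ in the sense of Proposition \ref{prop:Bekker}, adapted to the label set $\{1,\dots,n\}$. The point of this construction is the dictionary it sets up: since $f$ is affine on $P_i$, the image $f(P_i)$ is the convex hull of $\{a_{\phi(v)}:v\in V(P_i)\}$, so $f(P_i)$ is a genuine $d$-dimensional face of $\Delta$ exactly when the $d+1$ labels on $P_i$ are distinct, i.e. exactly when $P_i$ is completely labeled; otherwise $f(P_i)$ is a degenerate simplex of dimension $<d$. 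Moreover the second labeling hypothesis forces $f(\partial P)\subset\partial\Delta$: every point of $\partial P$ lies on a proper face $W$ of $P$, whose vertices carry a label set omitting at least one of $1,\dots,n$, so $f$ maps $W$ into the corresponding proper face of $\Delta$.

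Next I would package this into a chain-level statement over $\mathbb{Z}/2$. Orienting the $P_i$ consistently with $P$, I push the fundamental class forward to obtain the $d$-chain $c=f_*[P]=\sum_{F} m_F\,F$ supported on the $d$-faces $F$ of $\Delta$, where $m_F\in\mathbb{Z}/2$ counts, modulo $2$, the completely labeled simplices with $f(P_i)=F$ (degenerate images contribute nothing). Because $f(\partial P)\subset\partial\Delta$, the boundary $\partial c$ equals the $(d-1)$-cycle $z$ obtained by pushing forward the Sperner-labeled triangulated sphere $\partial P$; as every vertex $v_j$ of $P$ lies on $\partial P$ and carries the distinct label $j$, the cycle $z$ spans all $n$ vertices $a_1,\dots,a_n$. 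The key bookkeeping observation is then that each face $F$ in the support of $c$ must receive at least one completely labeled simplex, and distinct faces receive distinct simplices, so the number of completely labeled $P_i$ is at least $\lvert\mathrm{supp}(c)\rvert$.

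It remains to bound $\lvert\mathrm{supp}(c)\rvert$ from below, and this is where I expect the real work to lie. The chain $c$ is a filling, inside $\Delta$, of the prescribed $(d-1)$-cycle $z$ that spans all $n$ vertices, so any lower bound on the number of $d$-faces needed to fill such a cycle transfers directly to a lower bound on the completely labeled simplices. I would obtain exactly the bound $\lvert\mathrm{supp}(c)\rvert\ge n-d$ from the covering-number estimate $C(P)\ge n-d$ recorded above (from \cite{Su2002} and \cite{Rothschild}): the completely labeled simplices, carried by $f$ onto the faces of $\Delta$, surject onto the $d$-skeleton traced by $f(P)$ and therefore realize a cover in the required sense, whose size is at least $n-d$. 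The main obstacle is precisely this last reduction: one must verify that passing to $\mathbb{Z}/2$ coefficients does not lose faces through cancellation in a way that undercuts the count, that the filling $c$ genuinely inherits the full boundary sphere on all $n$ labels (so that the ``$n$'' in $n-d$ is the correct one), and that ``filling $z$'' is equivalent to ``covering $P$'' so that the covering-number bound is legitimately applicable; once these identifications are in place the inequality $n-d$ drops out immediately.
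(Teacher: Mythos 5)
Your first two paragraphs set up a sound dictionary and are essentially an affine reformulation of the paper's realization: completely labeled $P_i$ correspond exactly to the simplices with nondegenerate $d$-dimensional image, and the bound ``number of completely labeled $P_i \ \geq\ \lvert\mathrm{supp}(c)\rvert$'' is correct. The genuine gap is the step you yourself flag as ``where the real work lies'': the inequality $\lvert\mathrm{supp}(c)\rvert\geq n-d$ does not follow from the fact that $c$ fills the cycle $z$ in $\Delta$. The simplex $\Delta$ is contractible, so $H_{d-1}(\Delta)=0$ and every $(d-1)$-cycle bounds; there is no isoperimetric-type principle forcing a filling of $z$ to use many $d$-faces, and the covering-number theorem of \cite{Su2002} and \cite{Rothschild} is a statement about set-theoretic covers of the polytope $P$ by simplices spanned by vertices of $P$, not about chain-level fillings in $\Delta$. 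The information that actually forces many faces is not ``$\partial c=z$'' but the fact that the labeling wraps $P$ around itself with degree one, and your formulation has discarded that.

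The paper closes exactly this gap by realizing $\phi$ as a map $f\colon P\to P$ --- equivalently, composing your $f$ with the affine map $g\colon\Delta\to P$ sending $a_j$ to the vertex of $P$ carrying label $j$ --- and using $\deg(\partial f)=\deg(f)=1$ to conclude that $f$ is surjective. Surjectivity, together with your (correct) observation that incompletely labeled $P_i$ have images of dimension $<d$, shows that the simplices $\mathrm{conv}(a_{\phi(l_1)},\ldots,a_{\phi(l_{d+1})})$ attached to the completely labeled $P_i$ form an honest cover of $P$, and only then does $C(P)\geq n-d$ apply. To repair your argument you would need to introduce $g$, show that the faces of $\Delta$ hit by completely labeled simplices (or at least those of odd multiplicity) push forward under $g$ to a cover of $P$, and that reduction is again the degree-one argument; your worry about $\mathbb{Z}/2$ cancellation is also live here, since cancellation can only shrink $\mathrm{supp}(c)$ and thus weaken the very quantity you need to bound from below. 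As written, the final inequality is asserted rather than proved.
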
 
	
	\begin{proof} 
		
		Denote by $a_i$ the vertex of $P$ labeled with $i$. Form a realization of $\phi$ as a continuous map $f: P \to P$, satisfying the following conditions:
		\begin{itemize}
			\item If $v$ is a vertex of $P$ then $f(v)=a_{\phi(v)}$, i.e., $f(v)$ is the vertex of $P$ of index equal to the label of $v$;
			\item If $R$ is face of $P$ with vertices $w_1,\ldots,w_k$ then $f(R)\subset \textrm{conv}(a_{\phi(w_1)},\ldots, a_{\phi(w_k)})$, where $\textrm{conv}(\cdot)$ denotes the convex hull of a set.
			\item For any $i$, the restriction of $f|_i$ to $P_i$, with $P_i$ having vertices $l_1,\ldots,l_k$, is such that $f(P_i) \subset \textrm{conv}(a_{\phi(l_1)},\ldots, a_{\phi(l_k)})$ and is also a realization.
		\end{itemize}

		Given that a vertex of $P_i$ which lies on some face $W$ of $P$ is assigned one of the labels of  the vertices of $W$, we have that this labeling is \emph{non-degenerate}. From the conditions of the realization and the first condition of the labeling, we have that $deg(\partial f) = deg(f) = 1$. Thus, $f$ is surjective, which means that a set of $P_i$ simplices cover $P$ under $f$. Notice, however, that every $P_i$ which is not completely labeled is mapped to a $n-1$ or smaller dimension of $P$, under the realization, and cannot be used to cover $P$. So only completely labeled $P_i$ polytopes can be used to cover $P$. But the smallest number of  simplices needed to cover a $d$-dimensional polytope with $n$ vertices is $n-d$. Thus we must have at least $n-d$ complete simplices $P_i$. $P$ must be split into at least $n-d$ different simplices and each one must be mapped to by at least one $P_i$ and each such $P_i$ is completely labeled.   
		
	\end{proof}
	
	\begin{rem}
		After writing this paper I found a similar proof in \cite{Musin}, one which also uses the notion of a degree. Nevertheless, the use of the equality $\deg(\partial f) = \deg(f)$ greatly simplifies the proof.   
	\end{rem}

	The following is a generalizations of Atanassov's conjecture.
	
	\begin{thm}\label{generlization 1} Let $P$ be a convex, $d$-dimensional polytope, with $n$ vertices,  $P=\bigcup_{i\in I}P_i$ is a decomposition of $P$ into simplices as above. Assume that $\phi:V(P)\cup\bigcup_{i\in I}V(P_i)\to\{1,2,\ldots,n\}$ is a \emph{non-degenerate} labeling of the vertices of $P$ and of $P_i$. A simplex $P_i$ is completely labeled if it has vertices carrying $d+1$ different labels. Take $P'$ to be a $d$-dimensional polytope, with $n$ vertices. We require that each label from $\{1,2,\ldots,n\}$ is assigned to some  vertex of $P'$, so  no two vertices are assigned the same label. Take $f$ to be a realization of $\phi$, where $f: P \to P'$. Then we have at least $((n-d))(\deg(\partial f))$ different completely labeled simplices $P_i$. Furthermore, if $\deg(\partial f)=0$ then we will have an even number of completely labeled simplices.      
		
	\end{thm}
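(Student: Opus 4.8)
The plan is to re-run the degree argument used for Atanassov's Conjecture, but now tracking both the multiplicity with which the completely labeled simplices cover $P'$ and an orientation sign attached to each of them. First I would fix a realization $f:P\to P'$ of $\phi$, which exists and is unique up to homotopy of pairs by Proposition~\ref{prop:Bekker}; using the homotopy invariance I may take $f$ to be affine on each simplex $P_i$, and the non-degeneracy of $\phi$ gives $\deg(\partial f)=\deg(f)$. As in the previous proof, any $P_i$ that is not completely labeled carries at most $d$ distinct labels, so its image lies in the convex hull of at most $d$ vertices of $P'$, a set of dimension at most $d-1$; such simplices miss every regular value. A completely labeled $P_i$ is instead mapped affinely onto a full-dimensional sub-simplex $\Delta_i=\textrm{conv}(a_{\phi(l_0)},\ldots,a_{\phi(l_d)})\subset P'$, to which I attach the sign $\epsilon_i=\pm1$ recording whether $f|_{P_i}$ preserves or reverses orientation; affineness guarantees that each such $\Delta_i$ is covered simply, so every completely labeled simplex contributes at most one preimage of a regular value.

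Next I would localize the degree. For any regular value $p\in\textrm{int}(P')$ the definition of the Brouwer degree yields
\[\deg(f)=\sum_{i:\,p\in\textrm{int}(\Delta_i)}\epsilon_i,\]
where the sum ranges only over those completely labeled simplices whose image contains $p$. Writing $m=\deg(\partial f)=\deg(f)$, this single identity drives both claims. Since each $\epsilon_i=\pm1$ and each such simplex covers $p$ exactly once, the number of completely labeled simplices covering $p$ is at least $|m|$, so the family $\{\Delta_i\}$ covers $P'$ with multiplicity at least $|m|$ at every interior point; and when $m=0$ that number is a sum of $\pm1$'s equal to zero, hence even, so every regular value is covered by an even number of completely labeled simplices.

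The crucial step, and the one I expect to be the main obstacle, is to pass from this pointwise multiplicity to a global count. For the main inequality I need the multiple-cover strengthening of the covering-number bound of \cite{Su2002}: that covering a $d$-polytope with $n$ vertices with multiplicity at least $|m|$ requires at least $|m|(n-d)$ simplices, which then gives $N\ge |m|(n-d)=(n-d)\,\deg(\partial f)$ (read via $|\deg(\partial f)|$, the bound being vacuous when $\deg(\partial f)<0$). The tempting shortcut --- repeatedly extracting a minimal sub-cover, each of size at least $n-d$ by the multiplicity-one bound, and deleting it $|m|$ times --- is not valid as stated, because deleting a minimal cover can drop the covering multiplicity of some points by more than one, and a multiple cover need not decompose into disjoint single covers. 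My plan is therefore to re-examine the proof of the covering-number bound itself and verify that the covering multiplicity enters it as an overall multiplicative factor, i.e. that $n-d$ is precisely the degree-one instance of the count $(n-d)\deg$, so that carrying the multiplicity $|m|$ through the same argument produces the factor directly; this is where the real work lies.

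Finally, for the parity statement I would argue that even coverage at every regular value forces the \emph{total} number $N$ of completely labeled simplices to be even. The key observation is that a completely labeled simplex possessing a \emph{private} region --- an open set of points covered by it and by no other $\Delta_j$ --- would make the coverage there equal to $1$, contradicting evenness; hence every $\Delta_i$ is covered by the remaining ones, and a short combinatorial pairing of the simplices along the cells of the arrangement $\{\Delta_i\}$, where the coverage is locally constant and even, yields that $N$ is even. I would isolate this pairing as a small lemma, the only extra ingredient beyond the degree identity; it specializes cleanly to the case $n=d+1$, where every $\Delta_i$ equals $P'$ and the identity $\sum_i\epsilon_i=m$ makes the evenness immediate.
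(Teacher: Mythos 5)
For reference, the paper's own ``proof'' of this theorem is the single line ``Follows almost immediately from the previous proof,'' so there is no detailed argument to compare against; your write-up is already more explicit than the paper's. Your degree-localization identity $\deg(f)=\sum_{i:\,p\in\textrm{int}(\Delta_i)}\epsilon_i$ is correct and is the right starting point. But the two steps you yourself flag as unfinished are genuine gaps, not formalities. For the quantitative bound, the statement you need --- that a family of vertex-spanned $d$-simplices covering every interior point of an $(n,d)$-polytope with multiplicity at least $|m|$ must have at least $|m|(n-d)$ members --- is \emph{not} the covering-number theorem cited in the paper (\cite{Su2002}), which is the multiplicity-one case, and you correctly observe that it does not follow from it by extracting disjoint sub-covers. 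You propose to ``re-examine the proof of the covering-number bound and verify that the multiplicity enters as a multiplicative factor,'' but you do not carry this out, and it is exactly the content of the theorem beyond the $\deg=1$ case. Until that lemma is proved (or located in the literature), the inequality $N\ge(n-d)\deg(\partial f)$ is not established.

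The parity claim is in worse shape: the implication you are trying to prove --- that if every regular value is covered by an even number of the simplices $\Delta_i$ then the total number $N$ of completely labeled simplices is even --- is \emph{false} as a statement about abstract families of vertex-spanned simplices when $d$ is odd, so no ``combinatorial pairing along the cells of the arrangement'' can close this step without using more structure than the coverage function. Concretely, let $P'$ be a triangular bipyramid in $\R^3$ with vertices $a_1,\dots,a_5$ ($n=5$, $d=3$). Its two triangulations give
\[
\mathbf{1}_{\textrm{conv}(a_1a_2a_3a_4)}+\mathbf{1}_{\textrm{conv}(a_1a_2a_3a_5)}=\mathbf{1}_{P'}=\mathbf{1}_{\textrm{conv}(a_1a_2a_4a_5)}+\mathbf{1}_{\textrm{conv}(a_1a_3a_4a_5)}+\mathbf{1}_{\textrm{conv}(a_2a_3a_4a_5)}
\]
almost everywhere, so the five tetrahedra together cover every generic point exactly twice, yet there are five of them. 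With signs $(+,+,-,-,-)$ this family even has total signed coverage $0$ at every regular value, i.e., it is consistent with $\deg(f)=0$. Thus evenness of the coverage at regular values simply does not determine the parity of $N$; to prove (or refute) the theorem's parity assertion you must either show that such a configuration cannot arise as the set of images of the completely labeled cells of an actual labeled triangulation, or find a different invariant (e.g., a door-counting argument on the triangulation itself). As written, both halves of the theorem remain unproved, which also means the paper's claim that the result ``follows almost immediately'' from the Atanassov argument is not justified.
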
 
	
	\begin{proof}
		Follows almost immediately from the previous proof.	
	\end{proof}
	
	\section{Neighboring Labeling Lemma}
	
	In this section a new labeling lemma is presented, one very similar to the original Sperner Lemma. The concept of the Brouwer Degree is again used.   
	
\begin{thm}
	Let $P$ be a convex, $d$-dimensional polytope, with $n$ vertices, which is divided into convex, $d$-dimensional polytopes $\{P_i\}_{i\in I}$, with $I$ finite, such that $P=\bigcup_{i\in I}P_i$, and  for $i\neq j$, $\inte (P_i)\cap \inte (P_j)=\emptyset$ and $P_i\cap P_j$ is either empty or a face of both $P_i$ and $P_j$. Assume that $\phi:V(P)\cup\bigcup_{i\in I}V(P_i)\to\{1,2,\ldots,n\}$ is a labeling of the vertices of $P$ and of $P_i$ such that:
\begin{itemize}
	\item  Each label from $\{1,2,\ldots,n\}$ is assigned to some  vertex of $P$, so no two vertices are assigned the same label. We call two labels $d$-similar if there is a $d$ dimensional face of $P$ which they share. Also, a label is $n$-similar to itself for all $n$.
	\item Every vertex $v$ of every $P_i$ is assigned one of the labels of the vertices of $P$ in the $car(v)$.
	\item If a vertex of $P_i$  shares a $k$-face with another vertex (either of $P_i$ or of $P_j$) then both of their labels must be $k$-similar, for all $k$.
\end{itemize}

A polytope $P_i$ is completely labeled if it has vertices carrying labels $\{1,2,\ldots, n\}$. This lemma states that there is at least one completely labeled $P_i$.

\end{thm}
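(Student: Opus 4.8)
The plan is to reuse the degree-theoretic argument of the previous section almost verbatim, and to isolate the one genuinely new ingredient: how the \emph{similarity} (``neighboring'') hypothesis forces a single piece to collect all $n$ labels.

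First I would construct a realization $f:P\to P$ of $\phi$ by sending the vertex of $P$ labeled $i$ to itself. The first hypothesis makes $\phi$ a bijection on $V(P)$, so $f$ fixes every vertex of $P$, and the carrier condition (the second bullet, $\phi(v)$ being a label of a vertex of $P$ lying in $car(v)$) guarantees that $f$ is an admissible realization carrying each face of $P$ into the convex hull of its own vertices. Exactly as in the proof of Atanassov's Conjecture, this gives $\deg(\partial f)=\deg(f)=1$, whence $f$ is surjective and the family $\{P_i\}$ covers $P$ under $f$, with $f(P_i)\subset \textrm{conv}(a_{\phi(l_1)},\dots,a_{\phi(l_k)})$ when $l_1,\dots,l_k$ are the vertices of $P_i$.

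Next I would exploit the third hypothesis to restrict which label sets can occur on one piece. Its content is that the subdivision may never ``jump'' directly between two labels whose vertices of $P$ do not already share a face of the appropriate dimension: along any $k$-face of the subdivision the labels present must be pairwise $k$-similar. In particular, a piece $P_i$ that omits some label $m$ has all of its labels lying in a vertex set $S\subsetneq V(P)$ that is admissible for the similarity relation, so its image is confined to the proper sub-polytope $\textrm{conv}(S)$.

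The hard part will be turning surjectivity together with this constraint into the \emph{existence} of a completely labeled piece. I would argue by contradiction: assume every $P_i$ omits a label. Covering a neighborhood of the vertex $a_m$ forces some piece to carry the label $m$; the similarity rule then pins down the labels allowed on the faces of that piece adjacent to $a_m$, and propagating this constraint across the chain of neighboring pieces that cover a path from $a_m$ toward the other vertices should force the omitted labels to reappear, so that some piece must in fact carry all of $\{1,\dots,n\}$. The obstacle is precisely this propagation step: one must show that the admissible proper sub-polytopes $\textrm{conv}(S)$, $S\subsetneq V(P)$, compatible with the similarity relation cannot support a degree-one surjective covering of the full-dimensional $P$ unless one of them is already all of $P$, i.e.\ unless some $P_i$ is completely labeled. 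Making this local-to-global propagation rigorous, rather than the degree computation itself, is where I expect the difficulty to concentrate.
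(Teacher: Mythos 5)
Your opening matches the paper's proof exactly: build the realization $f:P\to P$ fixing the vertices of $P$, conclude $\deg(\partial f)=\deg(f)=1$, hence surjectivity, and observe that a piece $P_i$ omitting a label has $f(P_i)\subset\textrm{conv}(S)$ for a proper subset $S\subsetneq V(P)$. But you then stop at the decisive step. You propose to reach a contradiction by propagating the similarity constraint along chains of adjacent pieces starting from a vertex $a_m$, and you explicitly concede that you cannot make this local-to-global propagation rigorous. As written, that is a genuine gap, not a proof: nothing in your argument rules out that a family of incomplete pieces, each mapping into some proper $\textrm{conv}(S)$, still covers $P$ --- note that $\textrm{conv}(S)$ with $S\subsetneq V(P)$ can very well contain interior points of $P$, so surjectivity alone does not finish the job.

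The paper closes the argument with a purely local dichotomy and no propagation between pieces: it claims that the third labeling condition forces every $P_i$ that is not completely labeled to satisfy $f(P_i)\subset\partial P$, i.e.\ its label set must lie on a common proper face of $P$, so incomplete pieces cannot reach $\inte(P)$; if every piece were incomplete then $f(P)=\bigcup_i f(P_i)\subset\partial P$, contradicting $\deg(f)=1$. So the idea you are missing is a one-piece-at-a-time statement about the similarity relation, not a global chain argument. Be aware that even this local claim needs justification --- one must show that pairwise $k$-similarity of the labels appearing on a single piece forces \emph{all} of them onto a common proper face of $P$, which is where the content of the hypothesis sits and which the paper itself asserts rather than proves --- but identifying and establishing that local claim, rather than your propagation scheme, is what completes the proof.
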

	
	\begin{proof}
		We form a realization $f: P \to P$ of $\phi$. Any $P_i$ is either mapped only to $\partial P$ or covers $P$ entirely. This observation follows from the condition that if a vertex of $P_i$ is connected to another vertex (either of $P_i$ or of $P_j$) then both of their labels must be similar. Once again we have that $deg(\partial f)  =   \deg(f) = 1$. But if $\deg(f) = 1$ we must have at least one complete $P_i$, for any individual $P_i$ either covers $P$ entirely, in which case it is completely labeled, or only covers the boundary of $P$. But if all of the $P_i$'s only cover the boundary of $P$ then $   \deg(f) \not = 1$
		
	\end{proof}

	\section*{Acknowledgement}
	
	The author is grateful to Marian Gidea, who read and commented on the first draft of this work. As mentioned, parts of the second section were taken from a paper which we both authored.

	\bibliographystyle{alpha}
	
	\bibliography{Sperner_biblio}

\begin{thebibliography}{Mus14}

\bibitem[BN98]{Bekker1995}
Boris~M. Bekker and N.~Yu. Netsvetaev.
\newblock Generalized {S}perner lemma and subdivisions into simplices of equal
  volume.
\newblock {\em Journal of Mathematical Sciences}, 91(6):3492--3498, 1998.

\bibitem[GS17]{GS}
M.~Gidea and Y.~Shmalo.
\newblock Combinatorial approach to detection of fixed points, periodic orbits,
  and symbolic dynamics.
\newblock {\em eprint arXiv:1706.08960}, 2017.

\bibitem[LPS02]{Su2002}
Jesus A.~De Loera, Elisha Peterson, and Francis~Edward Su.
\newblock A {P}olytopal {G}eneralization of {S}perner's {L}emma.
\newblock {\em Journal of Combinatorial Theory, Series A}, 100(1):1 -- 26,
  2002.

\bibitem[Mus14]{Musin}
Oleg~R Musin.
\newblock Around sperner's lemma.
\newblock {\em eprint arXiv:1405.7513}, 2014.

\bibitem[RS85]{Rothschild}
G.L. Rothschild and E.G. Straus.
\newblock {O}n triangulations of the convex hull of n points.
\newblock {\em Combinatorica}, 5:167--179, 1985.

\bibitem[Spe28]{Sperner1928}
E.~Sperner.
\newblock {N}euer {B}eweis f{\"{u}}r die {I}nvarianz der {D}imensionszahl und
  des {G}ebietes.
\newblock {\em Abh. Math. Sem. Univ. Hamburg}, 6:265--272, 1928.

\end{thebibliography}

\end{document}